\documentclass[12pt,reqno]{amsart}
\usepackage{geometry}                
\geometry{letterpaper}                   
\usepackage{amsmath,amssymb}
\usepackage{graphicx}
\usepackage{amssymb,latexsym, amsmath}
\usepackage{amsfonts,amsthm}
\usepackage{amscd}
\usepackage{mathrsfs}
\usepackage{hyperref}
\usepackage{eucal}
\usepackage{epstopdf}
\usepackage{amsgen}
\usepackage{xspace}
\usepackage{verbatim}
\usepackage{stmaryrd}
\DeclareGraphicsRule{.tif}{png}{.png}{`convert #1 `dirname #1`/`basename #1 .tif`.png}
\usepackage{enumitem}
\newlist{steps}{enumerate}{1}
\setlist[steps, 1]{label = Step \arabic*:}

\newcommand{\gd}{\Delta}

\newcommand{\inpt}[1]{\langle #1 \rangle}

\newcommand{\gw}{\Omega}
\newcommand{\ap}{\alpha}

\newcommand{\gb}{\beta}

\newcommand{\gl}{\lambda}

\newcommand{\ms}{\mathscr}

\newcommand{\nb}{\nabla}
\newcommand{\vp}{\varphi}
\newcommand{\ve}{\varepsilon}
\newcommand{\pdr}{\partial}

\newcommand{\beq}{\begin{equation}}
\newcommand{\eeq}{\end{equation}}
\newcommand{\bea}{\begin{align}}
\newcommand{\eea}{\end{align}}
\newcommand{\bthm}{\begin{theorem}}
\newcommand{\ethm}{\end{theorem}}
\newcommand{\bpr}{\begin{proof}}
\newcommand{\epr}{\end{proof}}
\newcommand{\bcl}{\begin{corollary}}
\newcommand{\ecl}{\end{corollary}}
\newcommand{\bpn}{\begin{proposition}}
\newcommand{\epn}{\end{proposition}}
\newcommand{\bre}{\begin{remark}}
\newcommand{\ere}{\end{remark}}
\newcommand{\bdf}{\begin{definition}}
\newcommand{\edf}{\end{definition}}
\newcommand{\bss}{\begin{align*}}
\newcommand{\ess}{\end{align*}}

\newcommand{\bl}{\label}

\newcommand{\mR}{\mathbb{R}}

\newtheorem{theorem}{Theorem}[section]
\newtheorem{corollary}[theorem]{Corollary}

\newtheorem{proposition}[theorem]{Proposition}

\theoremstyle{definition}
\newtheorem{definition}[theorem]{Definition}
\theoremstyle{remark}
\newtheorem{remark}{Remark}

\numberwithin{equation}{section}


\begin{document}

\title[Coupled Hindmarsh-Rose Neurons]{A New Model of Coupled Hindmarsh-Rose Neurons}

\author[C. Phan]{Chi Phan}
\address{Department of Mathematics and Statistics, University of South Florida, Tampa, FL 33620, USA}
\email{chi@mail.usf.edu}
\thanks{}

\author[Y. You]{Yuncheng You}
\address{Department of Mathematics and Statistics, University of South Florida, Tampa, FL 33620, USA}
\email{you@mail.usf.edu}
\thanks{}

\subjclass[2000]{Primary: 35B41, 35K57, 37L30, 37L55; Secondary: 37N25, 92C20}

\date{October 28, 2019}


\keywords{Coupled Hindmarsh-Rose equations, absorbing dynamics, synchronization of neurons.}

\begin{abstract} 
A new model of two coupled neurons is presented by the partly diffusive Hindmarsh-Rose equations. The solution semiflow exhibits globally absorbing characteristics. As the main result, the self-synchronization of the coupled neurons at a uniform rate is proved, which can be extended to complex neuronal networks.
\end{abstract}

\maketitle
 
\section{Introduction}
The Hindmarsh-Rose equations for neuronal firing-bursting observed in experiments was initially proposed in \cite{HR}. The model originally composed of three coupled ordinary differential equations has been studied through numerical simulations and bifurcation analysis, cf. \cite{IG, EI, MFL, SPH, Tr, Su} and the references therein. 

In this paper, we present a new model of coupled two neurons in terms of the following system of the coupled partly diffusive Hindmarsh-Rose equations:
\beq \bl{cHR}
\begin{split}
	\frac{\pdr u_1}{\pdr t} & = d \gd u_1 +  au_1^2 - bu_1^3 + v_1 - w_1 + J + p (u_2 - u_1),  \\
	\frac{\pdr v_1}{\pdr t} & =  \alpha - v_1 - \beta u_1^2,    \\
	\frac{\pdr w_1}{\pdr t} & = q (u_1 - c) - rw_1,   \\
	\frac{\pdr u_2}{\pdr t} & = d \gd u_2 + au_2^2 - bu_2^3 + v_2 - w_2 + J + p (u_1 - u_2),  \\
	\frac{\pdr v_2}{\pdr t} & =  \alpha - v_2 - \beta u_2^2,    \\
	\frac{\pdr w_2}{\pdr t} & = q (u_2 - c) - rw_2, 
\end{split}
\eeq
for $t > 0,\; x \in \gw \subset \mathbb{R}^{n}$ ($n \leq 3$), where $\gw$ is a bounded domain with locally Lipschitz continuous boundary. Here $(u_i, v_i, w_i), \,i = 1, 2,$ are the state variables for two Hindmarsh-Rose (HR) neurons. The input electrical current $J > 0$ and the coefficient of neuron coupling strength $p > 0$ are treated as constants. For cell biological reason, the coupling terms are only with the equations of the membrane potential of neuronal cells. 

In this system \eqref{cHR}, the variable $u_i(t,x)$ refers to the membrane electrical potential of a neuronal cell, the variable $v_i(t, x)$ called the spiking variable represents the transport rate of the ions of sodium and potassium through the fast ion channels, and the variable $w_i(t, x)$ called the bursting variable represents the transport rate across the neuronal cell membrane through slow channels of calcium and other ions. 

All the involved parameters are positive constants except $c \,(= u_R) \in \mathbb{R}$, which is a reference value of the membrane potential of a neuron cell. In the original ODE model of a single neuron \cite{Su}, a set of the typical parameters are
\begin{gather*}
J = 3.281, \;\; r = 0.0021, \;\; S = 4.0, \; \; q = rS,  \;\; c = -1.6,  \\[3pt]
\vp (s) = 3.0 s^2 - s^3, \;\; \psi (s) = 1.0 - 5.0 s^2.
\end{gather*}
We impose the homogeneous Neumann boundary conditions for the $u_i$-components,
\begin{equation} \label{nbc}
\frac{\pdr u_1}{\pdr \nu} (t, x) = 0, \quad \frac{\pdr u_2}{\pdr \nu} (t, x)= 0,\quad  \text{for} \;\; t > 0,  \; x \in \partial \gw ,
\end{equation}
and the initial conditions to be specified are denoted by ($i = 1, 2$)
\begin{equation} \bl{inc}
u_i(0, x) = u_i^0 (x), \quad v_i(0, x) = v_i^0 (x), \quad w_i (0, x) = w_i^0 (x), \quad x \in \gw.
\end{equation}

\vspace{6pt}
The single HR neuron model was motivated by the discovery of neuronal cells in the pond snail \emph{Lymnaea}. This model characterizes the phenomena of synaptic bursting and more interested chaotic bursting in the $(u, v, w)$ space. 

Neuronal signals are short electrical pulses called spikes or action potential. Neurons often exhibit bursts of alternating phases of rapid firing spikes and then quiescence. Bursting constitutes a mechanism to modulate and set the pace for brain functionalities and to communicate signals. Synaptic coupling of neurons has to reach certain threshold for release of quantal vesicles and synchronization \cite{DJ, Ru, SC}. 

The bursting dynamics in chaotic coupling neurons in the simulations and semi-numerical analysis of the Hindmarsh-Rose model in ordinary differential equations exhibited more rapid synchronization and more effective regularization of neurons due to \emph{lower threshold} than the synaptic coupling \cite{Tr}. 

Bursting behavior and patterns occur in a variety of excitable cells and bio-systems such as pituitary melanotropic gland, thalamic neurons, respiratory pacemaker neurons, and insulin-secreting pancreatic $\beta$-cells, cf. \cite{BRS, CK,CS, HR}.  The mathematical analysis mainly using bifurcations of several models in ODEs on bursting behavior and neuronal synchronization has been studied by many authors, cf. \cite{ET, MFL, SPH, Tr, WS, Su}. 

It is known that Hodgkin-Huxley equations \cite{HH} provided a highly nonlinear four-dimensional model if without simplification. On the other hand, FitzHugh-Nagumo equations \cite{FH} provided a two-dimensional model for an excitable neuron. It admits an exquisite phase plane analysis showing sustained periodic spiking with refractory period, but seems hard to motivate any chaotic solutions and to generate chaotic bursting dynamics. 

The new model \eqref{cHR} proposed in this paper composed of the coupled partly diffusive Hindmarsh-Rose equations reflects the structural feature of neuronal cells: the central cell body containing the nucleus and intracellular organelles, the dendrites of short branches near the nucleus receiving incoming signals of voltage pulses, the long-branch axon, and the nerve terminals to communicate with other cells. The long axon of neurons propagating outgoing signals and the fact that neurons are immersed in aqueous biochemical solutions with charged ions suggest that the partly diffusive reaction-diffusion equations such as \eqref{cHR} will be more appropriate and realistic to describe the neuronal dynamics of the signal network for ensemble of neurons. It is expected that this new model and the advancing result on the uniform synchronization achieved in this paper will be exposed to a wide range of researches and applications. 

In recent work \cite{PY, PYS}, the authors studied the global dynamics for the single HR neuron model of diffusive Hindmarsh-Rose equations and proved the existence of global attractor and the existence of exponential attractor of the solution semiflow. Here we shall present the analysis of absorbing dynamics of this new model and then prove the main result on the synchronization of the coupled Hindmarsh-Rose neurons with the estimate of a threshold of the coupling strength for realizing the synchronization.

\section{\textbf{Formulation}}

Define the Hilbert spaces $H = L^2 (\gw, \mathbb{R}^6)$ and $E = ]H^1 (\gw) \times L^2 (\gw, \mathbb{R}^2)]^2$. The norm and inner-product of $H$ or $L^2(\gw)$ will be denoted by $\| \, \cdot \, \|$ and $\inpt{\,\cdot , \cdot\,}$, respectively. The norm of $E$ or $H^1 (\gw)$ will be denoted by $\| \, \cdot \, \|_E$. We use $| \, \cdot \, |$ to denote a vector norm in $\mR^n$.

The initial-boundary value problem \eqref{cHR}-\eqref{inc} can be formulated into the initial value problem of the evolutionary equation:
\begin{equation} \label{pb}
\begin{split}
\frac{\partial g}{\partial t} = &\,A g + f (g) + P(g), \quad t > 0, \\[2pt]
&g(0) = g_0 \in H.
\end{split}
\end{equation}
Here the column vector $g(t) = \text{col}\, (u_1(t, \cdot), v_1 (t, \cdot ), w_1(t, \cdot), u_2(t, \cdot), v_2 (t, \cdot ), w_2(t, \cdot))$ is the unknown function and the initial data function is $g_0 = \text{col}\,(u_1^0, v_1^0, w_1^0, u_2^0, v_2^0, w_2^0)$. The nonpositive self-adjoint operator associated with this problem is
\begin{equation} \label{opA}
A =
\begin{pmatrix}
d \gd \quad & 0  \quad & 0 \\[3pt]
0 \quad & - I \quad  & 0 \\[3pt]
0 \quad & 0 \quad & - r I \\[3pt]
d \gd \quad  & 0  \quad & 0 \\[3pt]
0 \quad & - I  \quad  & 0 \\[3pt]
0 \quad & 0 \quad & - r I 
\end{pmatrix}
: D(A) \rightarrow H,
\end{equation}
where $D(A) = \{g \in [H^2(\gw) \times L^2 (\gw, \mathbb{R}^2)]^2: \pdr u_1 /\pdr \nu = \pdr u_1 /\pdr \nu = 0\}$, is the generator of a $C_0$-semigroup $\{e^{At}\}_{t \geq 0}$ on the Hilbert space $H$. By the fact that $H^{1}(\gw) \hookrightarrow L^6(\gw)$ is a continuous imbedding for space dimension $n \leq 3$ and by the H\"{o}lder inequality, the nonlinear mapping 
\begin{equation} \label{opf}
f(g) =
\begin{pmatrix}
au_1^2 - bu_1^3 + v_1 - w_1 + J \\[3pt]
\alpha - \beta u_1^2  \\[3pt]
q (u_1 - c) \\[3pt]
au_2^2 - bu_2^3 + v_2 - w_2 + J \\[3pt]
\alpha - \beta u_2^2  \\[3pt]
q (u_2 - c)
\end{pmatrix}
: E \longrightarrow H
\end{equation}
is a locally Lipschitz continuous mapping. The coupling mapping is the vector function
\begin{equation} \label{opg}
P(g) =
\begin{pmatrix}
p(u_2 - u_1) \\[3pt]
0  \\[3pt]
0 \\[3pt]
p(u_1 - u_2) \\[3pt]
0  \\[3pt]
0
\end{pmatrix}
: H \longrightarrow H
\end{equation}

Consider the weak solution of this initial value problem \eqref{pb}, cf. \cite[Section XV.3]{CV}, defined below and similar to what is presented in \cite{PY, PYS}. 
\begin{definition} \label{D:wksn}
	A six-dimensional vector function $g(t, x), (t, x) \in [0, \tau] \times \gw$, is called a \emph{weak solution} to the initial value problem of the evolutionary equation \eqref{pb}, if the following conditions are satisfied: 
	
		\textup{(i)} $\frac{d}{dt} (g, \zeta) = (Ag, \zeta) + (f(g) + P(g), \zeta)$ is satisfied for a.e. $t \in [0, \tau]$ and any $\zeta \in E$;
		
		\textup{(ii)} $g(t, \cdot) \in  C([0, \tau]; H) \cap L^2 ([0, \tau]; E)$ and $g(0) = g_0$.
	
	\noindent Here $(\cdot , \cdot)$ is the dual product of $E^*$ versus $E$.
\end{definition}

The following proposition can be proved by the Galerkin approximation method.

\begin{proposition} \label{pps}
	For any given initial state $g_0 \in H$, there exists a unique local weak solution $g(t, g_0), \, t \in [0, \tau]$, for some $\tau > 0$ may depending on $g_0$, of the initial value problem \eqref{pb} associated with the coupled partly diffusive Hindmarsh-Rose equations \eqref{cHR}. The weak solution $g(t, g_0)$ continuously depends on the initial data and satisfies 
	\begin{equation} \label{soln}
	g \in C([0, \tau]; H) \cap C^1 ((0, \tau); H) \cap L^2 ([0, \tau]; E).
	\end{equation}
	If the initial data $g_0 \in E$, then the weak solution becomes a strong solution on the existence time interval $[0, \tau]$, which has the regularity
	\begin{equation} \bl{ss}
	g \in C([0, \tau]; E) \cap C^1 ((0, \tau); E) \cap L^2 ([0, \tau]; D(A)).
	\end{equation}
\end{proposition}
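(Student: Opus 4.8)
The plan is to construct the solution by the standard Faedo--Galerkin scheme, exactly as for the single-neuron model in \cite{PY, PYS}; the only new ingredient is the coupling map $P$, which by \eqref{opg} is a bounded linear operator on $H$ and therefore causes no additional difficulty. First I would fix the orthonormal basis $\{e_j\}_{j\geq 1}$ of $L^2(\gw)$ consisting of the eigenfunctions of $-\gd$ under the homogeneous Neumann condition \eqref{nbc}, and let $H_m\subset D(A)$ be the span of the six-component vectors built from $e_1,\dots,e_m$. Projecting \eqref{pb} onto $H_m$ yields a system of ordinary differential equations for the coefficients of the approximate solution $g_m(t)\in H_m$; since $f$ is polynomial, hence locally Lipschitz from $E$ (equivalently from $H_m$) into $H$ by the continuous imbedding $H^1(\gw)\hookrightarrow L^6(\gw)$ for $n\leq 3$, and $P$ is globally Lipschitz, the Picard--Lindel\"{o}f theorem gives a unique $g_m$ on a maximal interval $[0,\tau_m)$.

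Next come the uniform a priori estimates, where the structure of the equations is used. Taking the $H$-inner product of the Galerkin equation with $g_m$ --- more precisely, with the weighted vector obtained by multiplying the $v_i$- and $w_i$-components by suitably small constants $\gl,\mu>0$ --- the operator $A$ supplies the dissipation $-d(\|\nb u_1\|^2+\|\nb u_2\|^2)-\gl(\|v_1\|^2+\|v_2\|^2)-r\mu(\|w_1\|^2+\|w_2\|^2)$, the coupling contributes $-p\|u_1-u_2\|^2\leq 0$, and the leading cubic term contributes $-b(\|u_1\|_{L^4}^4+\|u_2\|_{L^4}^4)$. The remaining lower-order polynomial terms $au_i^3$, the cross terms $\gb u_i^2 v_i$ and $qu_iw_i$, the linear couplings $v_iu_i$, $w_iu_i$, and the constants are absorbed by Young's inequality into a fraction of the cubic dissipation and a fraction of the $\|v_i\|^2,\|w_i\|^2$ terms, provided $\gl$ and $\mu$ are chosen small enough relative to $b$, $\gb$, $q$, $r$. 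This produces a differential inequality $\tfrac{d}{dt}\Phi_m+\ggd\,\Phi_m+c_0(\|\nb u_1\|^2+\|\nb u_2\|^2)\leq C$ with $\Phi_m$ equivalent to $\|g_m\|^2$ and all constants independent of $m$, whence $g_m$ is bounded in $L^\infty([0,\tau];H)\cap L^2([0,\tau];E)$ for every $\tau>0$; in particular $\tau_m=\infty$ and the bound is uniform in $m$.

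With these bounds I would derive a uniform estimate for $\pdr_t g_m$ in $L^2([0,\tau];E^*)$ (the cubic term being controlled via $H^1(\gw)\hookrightarrow L^6(\gw)$ together with the parabolic interpolation $L^\infty(L^2)\cap L^2(H^1)\hookrightarrow L^{10/3}(\gw\times(0,\tau))$ for $n\leq 3$), and invoke the Aubin--Lions--Simon compactness lemma to extract a subsequence converging weakly-$*$ in $L^\infty(H)$, weakly in $L^2(E)$, and strongly in $L^2([0,\tau];H)$, hence a.e.\ on $\gw\times(0,\tau)$. The a.e.\ convergence and the uniform bounds let one pass to the limit in the polynomial nonlinearity $f(g_m)$ by a uniform-integrability argument, while $Ag_m$ and $P(g_m)$ pass trivially; the limit $g$ then satisfies Definition \ref{D:wksn}(i)--(ii). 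Since $g\in L^2([0,\tau];E)$ with $\pdr_t g\in L^2([0,\tau];E^*)$, the Lions--Magenes interpolation theorem gives $g\in C([0,\tau];H)$ and allows one to identify $g(0)=g_0$; parabolic smoothing for the semilinear equation upgrades this to $g\in C^1((0,\tau);H)$, establishing \eqref{soln}.

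For uniqueness and continuous dependence, let $g,\hat g$ be two weak solutions with data $g_0,\hat g_0$ and set $z=g-\hat g$. Testing the difference equation with $z$, the decisive point is that the cubic terms give $-b\inpt{u_i^3-\hat u_i^3,\,u_i-\hat u_i}=-b\int(u_i-\hat u_i)^2(u_i^2+u_i\hat u_i+\hat u_i^2)\leq-\tfrac{b}{4}\int(u_i-\hat u_i)^2(u_i+\hat u_i)^2$, which absorbs the indefinite quadratic term $a\int(u_i+\hat u_i)(u_i-\hat u_i)^2$ after Young's inequality, leaving a contribution bounded by $\tfrac{a^2}{b}\|u_i-\hat u_i\|^2$; the coupling again yields $-p\|(u_1-u_2)-(\hat u_1-\hat u_2)\|^2\leq 0$, and the $v_i,w_i$ equations are linear, so Gronwall's inequality gives $\|z(t)\|\leq e^{Ct}\|z(0)\|$ on $[0,\tau]$, i.e.\ uniqueness and Lipschitz dependence on the data. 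Finally, if $g_0\in E$, higher-order energy estimates --- testing the $u_i$-equations with $-\gd u_i$ and with $\pdr_t u_i$ (or using the analyticity of $\{e^{At}\}$) and controlling $u_i^3$ once more by $H^1\hookrightarrow L^6$ --- yield bounds in $L^\infty([0,\tau];E)\cap L^2([0,\tau];D(A))$ and the regularity \eqref{ss}. The one genuinely delicate step is the uniform a priori estimate of the second paragraph: one must calibrate the weights $\gl,\mu$ so that every destabilizing term (the quadratic $au_i^2$, the cross terms with $v_i,w_i$, and the positive-feedback coupling) is dominated by the cubic dissipation and the diffusion simultaneously and uniformly in $m$.
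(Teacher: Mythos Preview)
Your proposal is correct and is precisely the Faedo--Galerkin argument the paper has in mind: the paper does not actually prove Proposition~\ref{pps} but only asserts that it ``can be proved by the Galerkin approximation method'' and refers to \cite{PY,PYS} for the single-neuron case. What you have written fills in exactly that outline, and the only new feature relative to \cite{PY,PYS}---the bounded linear coupling $P$---is handled correctly by your observation that it contributes the nonpositive term $-p\|u_1-u_2\|^2$.

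One small remark on organization rather than correctness: the weighted energy estimate you sketch in the second paragraph is essentially the computation the paper defers to Theorem~\ref{Lm2}, where the weight is placed on the $u_i$-components (the constant $C_1=\tfrac{1}{b}(\gb^2+4)$) rather than on $v_i,w_i$ as you do; either choice works. Since that estimate already gives a global-in-time bound, your argument in fact yields global existence directly, whereas the paper separates the local statement (Proposition~\ref{pps}) from the global one (Theorem~\ref{Lm2}). Also, in your final sentence the coupling is stabilizing, not destabilizing, so it does not belong in the list of terms to be dominated.
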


In the next section, we shall prove the global existence of weak solutions in time for the initial value problem problem \eqref{pb} and present the analysis of the absorbing dynamics of the solution semiflow generated by the weak solutions.

The basics of infinite dimensional dynamical systems, which can be called as semiflow when generated by the autonomous parabolic partial differential equations, can be referred to \cite{CV, SY, Tm}.

\begin{definition} \label{Dabsb}
	Let $\{S(t)\}_{t \geq 0}$ be a semiflow on a Banach space $\ms{X}$. A bounded set $B^*$ of $\ms{X}$ is called an absorbing set for this semiflow, if for any given bounded set $B \subset \ms{X}$ there exists a finite time $T_B \geq 0$ depending on $B$, such that $S(t)B \subset B^*$ for all $t \geq T_B$. The semiflow is called dissipative if there exists an absorbing set.
\end{definition}

In the final section, we shall prove the main result on asymptotic synchronization of the coupled Hindmarsh-Rose neurons realized by this new model. Moreover, we can prove that the synchronization has a uniform rate independent of any initial conditions.

\section{\textbf{Absorbing Dynamics}}

First we prove the global existence of weak solutions in time for the initial value problem \eqref{pb} of the coupled partly diffusive Hindmarsh-Rose equations.

\begin{theorem} \label{Lm2}
	For any given initial state $g_0 \in H$, there exists a unique global weak solution in time, $g(t) = \textup{col}\, (u_1(t), v_1(t), w_1(t), u_2(t), v_2(t), w_2(t)), t \in [0, \infty)$, of the initial value problem \eqref{pb} of the coupled partly diffusive Hindmarsh-Rose equations \eqref{cHR}. 
\end{theorem}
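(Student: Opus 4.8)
The plan is to upgrade the local solution of Proposition~\ref{pps} to a global one by means of an a priori bound on the $H$-norm that precludes finite-time blow-up. By Proposition~\ref{pps}, for $g_0 \in H$ the weak solution $g(t) = g(t,g_0)$ is defined on a maximal interval $[0, T_{\max})$ and, by the usual continuation principle, if $T_{\max} < \infty$ then $\limsup_{t \uparrow T_{\max}}\|g(t)\| = \infty$. Thus it suffices to show that $\|g(t)\|$ remains finite on every bounded subinterval of $[0, T_{\max})$. This estimate is obtained by the standard energy method; one may perform the computation on the Galerkin approximations and pass to the limit, or equivalently on the strong solutions with $g_0 \in E$ furnished by Proposition~\ref{pps} and then extend to $g_0 \in H$ by density together with the continuous dependence stated there.

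For the estimate itself, take the $H$-inner product of \eqref{pb} with $g(t)$. The Neumann condition \eqref{nbc} gives, after integration by parts, $\inpt{Ag, g} = -d\sum_{i}\|\nb u_i\|^2 - \sum_i\|v_i\|^2 - r\sum_i\|w_i\|^2 \le 0$ for the linear part, and the coupling contributes
\[
\inpt{P(g), g} = p\,\inpt{u_2-u_1,\, u_1} + p\,\inpt{u_1-u_2,\, u_2} = -p\,\|u_1-u_2\|^2 \le 0 .
\]
The nonlinearity gives $\inpt{f(g), g} = \sum_i\big( a\!\int_\gw u_i^3\,dx - b\!\int_\gw u_i^4\,dx + \inpt{v_i - w_i + J,\, u_i} + \inpt{\ap - \gb u_i^2,\, v_i} + \inpt{q(u_i - c),\, w_i}\big)$, in which the decisive term is the negative cubic term $-b\int_\gw u_i^4\,dx$ coming from the $-bu_i^3$ in \eqref{cHR}; this is where $b>0$ is used. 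Every other term is handled by Young's inequality: each factor involving $u_i$ is absorbed into an arbitrarily small multiple of $\int_\gw u_i^4\,dx$ — possible since $\|u_i\|^2 \le \ve\int_\gw u_i^4\,dx + C_\ve$, $\;a\!\int_\gw u_i^3\,dx \le \ve\int_\gw u_i^4\,dx + C_\ve$, $\;\|u_i^2\|\,\|v_i\| \le \ve\int_\gw u_i^4\,dx + C_\ve\|v_i\|^2$, and so on — while the surviving factors produce a constant (depending on $|\gw|$ and the parameters) plus bounded multiples of $\|v_i\|^2$ and $\|w_i\|^2$. Choosing the splitting parameters small enough that all the $\int_\gw u_i^4\,dx$-pieces together do not exceed $\tfrac{b}{2}\sum_i\int_\gw u_i^4\,dx$, one obtains
\[
\frac{d}{dt}\,L(t) \le C_1\,L(t) + C_2, \qquad L(t) := \sum_{i=1,2}\big(\|u_i(t)\|^2 + \|v_i(t)\|^2 + \|w_i(t)\|^2\big),
\]
with constants $C_1, C_2 \ge 0$. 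By Gronwall's inequality $L(t) \le (L(0) + C_2 t)\,e^{C_1 t}$, finite for every finite $t$; hence $\|g(t)\|$ cannot blow up, so $T_{\max} = \infty$, and uniqueness is already part of Proposition~\ref{pps}.

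The argument is fairly routine, and the only point requiring care is that the cubic dissipation must not be consumed by the Young splittings of the mixed terms — in particular of $-\gb\inpt{u_i^2, v_i}$, whose natural bound carries a $\int_\gw u_i^4\,dx$ — which forces one to fix the splitting parameters only after seeing the coefficient $b$. With slightly more effort — weighting the $v_i$- and $w_i$-components by small positive constants before taking the inner product, and choosing the splitting parameters in the order dictated by those weights — the same computation yields the stronger dissipative bound $L(t) \le L(0)\,e^{-\ggd t} + C$ with $\ggd > 0$, which is the uniform estimate needed for the absorbing dynamics in the rest of this section; there the genuine obstacle is that the $v_i$- and $w_i$-equations carry no diffusion, so their sole stabilizing mechanism is the zeroth-order damping $-v_i$ and $-rw_i$, and one must ensure that after weighting the coefficients in front of $\|v_i\|^2$ and $\|w_i\|^2$ on the right-hand side stay strictly below those damping rates.
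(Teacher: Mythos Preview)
Your argument is correct and follows the same energy-method approach as the paper: $L^2$ inner products with the solution, Young's inequality to subordinate the mixed terms to the dominant dissipation $-b\int_\gw u_i^4\,dx$, and Gronwall to conclude. The only notable difference is organizational. The paper carries out the weighted estimate from the outset---multiplying the $u_i$-equations by a large constant $C_1 = (\gb^2 + 4)/b$ (equivalent to your suggestion of scaling down the $v_i$- and $w_i$-components)---and thereby obtains the dissipative bound
\[
\|g(t)\|^2 \le \frac{\max\{C_1,1\}}{\min\{C_1,1\}}\,e^{-r_1 t}\|g_0\|^2 + \frac{M}{\min\{C_1,1\}}\,|\gw|
\]
within the same computation; this inequality is then quoted directly in Corollary~\ref{Cor1} for the absorbing set and in Corollary~\ref{Cor2}. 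Your unweighted first pass gives only the growing bound $L(t) \le (L(0) + C_2 t)e^{C_1 t}$, which suffices for the theorem as stated but not for the subsequent corollaries, so in effect the paper merges your two steps into one.
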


\begin{proof}
	Summing up the $L^2$ inner-product of the $u_1$-equation with $C_1 u_1(t)$ and the $L^2$ inner-product of the $u_2$-equation with $C_1 u_2(t)$, where the adjustable constant $C_1 > 0$ is to be determined later, and by the Young's inequality we get
	\begin{equation} \label{u1}
	\begin{split}
	&\frac{C_1}{2} \frac{d}{dt} (\|u_1 \|^2 + \|u_2\|^2) + C_1 d (\| \nabla u_1 \|^2 + \|\nb u_2\|^2) \\
	= &\, \int_\gw C_1 (au_1^3 -bu_1^4  + u_1v_1 - u_1w_1 +Ju_1)\, dx \\
	+ &\, \int_\gw (C_1 (au_2^3 -bu_2^4  + u_2v_2 - u_2w_2 +Ju_2) - p(u_1 - u_2)^2)\,dx.
	\end{split}
	\end{equation}
	Summing up the $L^2$ inner-products of the $v_i$-equation with $v_i (t)$ and the $L^2$ inner-products of the $w_i$-equation with $w_i(t)$ for $i = 1, 2$, we have
	\begin{equation} \label{v1}
	\begin{split}
	&\frac{1}{2} \frac{d}{dt} (\|v_1 \|^2 + \| v_2\|^2) = \int_\gw (\ap v_1 - \gb u_1^2 v_1 - v_1^2 + \ap v_2 - \gb u_2^2 v_2 - v_2^2  )\, dx \\[2pt]
	\leq &\int_\gw \left(\ap v_1 +\frac{1}{2} (\gb^2 u_1^4 + v_1^2) - v_1^2 + \ap v_2 +\frac{1}{2} (\gb^2 u_2^4 + v_2^2) - v_2^2\right) dx \\[2pt]
	\leq & \int_\gw \left(2\ap^2 + \frac{1}{8} v_1^2 +\frac{1}{2} \gb^2 u_1^4 - \frac{1}{2} v_1^2 +  2\ap^2 + \frac{1}{8} v_2^2 +\frac{1}{2} \gb^2 u_2^4 - \frac{1}{2} v_2^2\right) dx \\[2pt]
	= &\int_\gw \left(4\ap^2 +\frac{1}{2} \gb^2 (u_1^4 + u_2^4) - \frac{3}{8} (v_1^2 + v_2^2) \right) dx,
	\end{split}
	\end{equation}
	and
	\begin{equation} \label{w1}
	\begin{split}
	&\frac{1}{2} \frac{d}{dt} (\|w_1 \|^2 + \| w_2 \|^2) = \int_\gw (q (u_1 - c)w_1 - rw_1^2 + q (u_2 - c)w_2 - rw_2^2)\, dx  \\[6pt]
	\leq & \int_\gw \left(\frac{q^2}{2r} (u_1 - c)^2 + \frac{1}{2} r w_1^2 - r w_1^2 + \frac{q^2}{2r} (u_2 - c)^2 + \frac{1}{2} r w_2^2 - r w_2^2\right) dx \\[6pt]
	\leq & \int_\gw \left(\frac{q^2}{r} (u_1^2 + u_2^2 + 2c^2) - \frac{1}{2} r (w_1^2 + w_2^2)\right) dx.
	\end{split}
	\end{equation}
	
	Now we choose the positive constant in \eqref{u1} to be $C_1 = \frac{1}{b} (\gb^2 + 4)$, so that
	$$
	\int_\gw (- C_1 b u_i^4)\, dx + \int_\gw (\gb^2 u_i^4)\, dx \leq \int_\gw (-4 u_i^4)\, dx, \quad i= 1, 2.
	$$ 
	Then we estimate all the mixed product terms on the right-hand side of the above three inequalities by using the Young's inequality in an appropriate way as follows. First in \eqref{u1}, for $i = 1, 2$, 
	$$
	\int_\gw C_1 au_i^3\, dx \leq \frac{3}{4} \int_\gw u_i^4\, dx + \frac{1}{4}\int_\gw (C_1 a)^4 \, dx \leq \int_\gw u_i^4\, dx + (C_1 a)^4 |\gw|, 
	$$
	and	
	\begin{gather*}
	\int_\gw C_1 (u_i v_i - u_i w_i + Ju_i)\, dx \\[2pt]
	\leq \int_\gw \left(2(C_1 u_i)^2 + \frac{1}{8} v_i^2 + \frac{(C_1 u_i)^2}{r} + \frac{1}{4} r w_i^2 + C_1 u_i^2 + C_1J^2 \right) dx,
	\end{gather*}
	where on the right-hand side of the second inequality we can further treat the three terms involving $u_i^2$ as follows,
	$$
	\int_\gw \left(2(C_1 u_i)^2 + \frac{(C_1 u_i)^2}{r} + C_1 u_i^2 \right) dx\; \leq \int_\gw u_i^4 \, dx +\left[C_1^2 \left(2 +\frac{1}{r}\right) + C_1\right]^2 |\gw |.
	$$
	Then in \eqref{w1},
	\begin{gather*}
	\int_\gw \frac{1}{r} q^2 u_i^2 \, dx \leq \int_\gw \left(\frac{u_i^4}{2} + \frac{q^4}{2r^2}\right) dx  \leq \int_\gw u_i^4\, dx + \frac{q^4}{r^2} |\gw|.
	\end{gather*}
	Substitute the above term estimates into \eqref{u1} and \eqref{w1}. Then sum up the resulting inequalities \eqref{u1}-\eqref{w1} to obtain
	
	\beq \label{g2}
	\begin{split}
		&\frac{1}{2} \frac{d}{dt} \left(C_1 (\|u_1\|^2 + \|u_2\|^2) +  (\|v_1\|^2 + \|v_2\|^2) + (\|w_1\|^2 + \|w_2\|^2) \right) \\[2pt]
		&\; + C_1 d\, (\|\nb u_1 \|^2 + \|\nb u_2 \|^2)  \\[2pt]
		\leq & \int_\gw C_1 (au_1^3 -bu_1^4 + u_1v_1 - u_1w_1 +Ju_1 )\, dx \\[2pt]
		&+ \, \int_\gw (C_1 (au_2^3 -bu_2^4  + u_2v_2 - u_2w_2 +Ju_2) - p(u_1 - u_2)^2)\,dx \\[2pt]
		&+ \int_\gw \left(4\ap^2 +\frac{1}{2} \gb^2 (u_1^4 + u_2^4) - \frac{3}{8} (v_1^2 + v_2^2) \right) dx \\[2pt]
		&+ \int_\gw \left(\frac{q^2}{r} (u_1^2 + u_2^2 + 2c^2) - \frac{1}{2} r (w_1^2 + w_2^2)\right) dx\\[2pt]
		\leq & \int_\gw (3 - 4)(u_1^4 + u_2^4)\, dx + \int_\gw \left(\frac{1}{8} - \frac{3}{8}\right) (v_1^2 + v_2^2)\, dx + \int_\gw \left(\frac{1}{4} - \frac{1}{2} \right) r(w_1^2 + w_2^2)\, dx \\[2pt]
		&+  \, |\gw | \left( 2(C_1 a)^4 + 2C_1 J^2  + 2\left[C_1^2 \left(2 +\frac{1}{r}\right) + C_1\right]^2 + 4\ap^2 + \frac{2q^2 c^2}{r} + \frac{2q^4}{r^2} \right) \\[2pt]
		= &\, - \int_\gw \left((u_1^4+ u_2^4)(t, x) + \frac{1}{4} (v_1^2 + v_2^2) (t, x) + \frac{1}{4} r(w_1^2 + w_2^2) (t, x) \right) dx + C_2 |\gw |, 	
	\end{split}
	\eeq
	where $C_2 > 0$ is the constant given by 
	$$
	C_2 = 2(C_1 a)^4 + 2C_1 J^2  + 2\left[C_1^2 \left(2 +\frac{1}{r}\right) + C_1\right]^2 + 4\ap^2 + \frac{2q^2 c^2}{r} + \frac{2q^4}{r^2}.
	$$
	We see that \eqref{g2} yields the following uniform estimate,
	\beq \label{E1}
	\begin{split}
		&\frac{d}{dt} \left(C_1 (\|u_1\|^2 + \|u_2\|^2) +  (\|v_1\|^2 + \|v_2\|^2) + (\|w_1\|^2 + \|w_2\|^2) \right) \\[8pt]
		&\; + C_1 d\, (\|\nb u_1 \|^2 + \|\nb u_2 \|^2)  \\[5pt]
		&\;+ 2\int_\gw \left((u_1^4+ u_2^4)(t, x) + \frac{1}{4} (v_1^2 + v_2^2) (t, x) + \frac{1}{4} r(w_1^2 + w_2^2) (t, x) \right) dx \leq 2C_2 |\gw|, 
	\end{split}
	\eeq
	for $t \in I_{max} = [0, T_{max})$, the maximal time interval of solution existence. For $i = 1, 2$, 
	$$ 
	2u_i^4 \geq \frac{1}{2} \left(C_1 u_i^2 - \frac{C_1^2}{16}\right).
	$$
	It follows from \eqref{E1} that
	\begin{equation*}
	\begin{split}
	&\frac{d}{dt} \left(C_1 (\|u_1\|^2 + \|u_2\|^2) +  (\|v_1\|^2 + \|v_2\|^2) + (\|w_1\|^2 + \|w_2\|^2) \right) \\[7pt]
	&\;+ C_1 d\, (\|\nb u_1 \|^2 + \|\nb u_2 \|^2)  \\[5pt]
	&\;+\, \frac{1}{2} \int_\gw \left(C_1 (u_1^2 + u_2^2)(t, x)+ (v_1^2 + v_2^2)(t, x) + r (w_1^2 + w_2^2)(t, x)\right) dx \\
	\leq &\left(2C_2 + \frac{C_1^2}{16}\right) |\gw |.
	\end{split}
	\end{equation*}
	Set $r_1 = \frac{1}{2} \min \{1, r\}$. Then we have 
	\begin{equation} \label{E2}
	\begin{split}
	&\frac{d}{dt} \left(C_1 (\|u_1\|^2 + \|u_2\|^2) +  (\|v_1\|^2 + \|v_2\|^2) + (\|w_1\|^2 + \|w_2\|^2) \right) \\[7pt]
	&\;+ C_1 d\, (\|\nb u_1 \|^2 + \|\nb u_2 \|^2)  \\[8pt]
	&\;+  r_1 (C_1 (\| u_1\|^2 + \|u_2\|^2) + (\| v_1 \|^2 + \|v_2\|^2) + (\|w_1\|^2 + \|w_2\|^2)) \\
	\leq &\, \left(2C_2 + \frac{C_1^2}{16}\right) |\gw |.
	\end{split}
	\end{equation}
	Apply the Gronwall inequality to \eqref{E2} with the term $C_1 d\, (\|\nb u_1 \|^2 + \|\nb u_2 \|^2)$ being removed, we obtain
	\beq \label{dse}
	\begin{split}
		\|g(t)\|^2 =  \|u_1(t)\|^2 + &\, \|u_2(t)\|^2 + \| v_1(t) \|^2 + \|v_2(t)\|^2 + \|w_1(t)\|^2 + \|w_2(t)|^2 \\[3pt]
		\leq &\, \frac{\max \{C_1, 1\}}{\min \{C_1, 1\}}e^{- r_1 t} \|g_0\|^2  + \frac{M}{\min \{C_1, 1\}} |\gw |
	\end{split}
	\eeq 
	for $t \in I_{max} = [0, T_{max})$, where 
	$$
	M = \frac{1}{r_1}\left(2C_2 + \frac{C_1^2}{16}\right).
	$$
	The estimate \eqref{dse} shows that the weak solution $g(t, x)$ will never blow up at any finite time because it is uniformly bounded. Indeed we have
	\beq \bl{gbd}
	\|g(t)\|^2 \leq  \frac{\max \{C_1, 1\}}{\min \{C_1, 1\}}\, \|g_0\|^2 +  \frac{M}{\min \{C_1, 1\}} |\gw |, \quad \text{for} \;\, t \in [0, \infty).
	\eeq
	Therefore the weak solution of the initial value problem \eqref{pb} for the partly diffusive Hindmarsh-Rose equations \eqref{cHR} exists globally in time for any initial data. The time interval of maximal existence is always $[0, \infty)$.
\end{proof}

The global existence and uniqueness of the weak solutions and their continuous dependence on the initial data enable us to define the solution semiflow of the partly diffusive Hindmarsh-Rose equations \eqref{cHR} on the space $H$ as follows:
$$
S(t): g_0 \longmapsto g(t, g_0), \quad  g_0 \in H, \;\; t \geq 0,
$$
where $g(t, g_0)$ is the weak solution with the initial status $g(0) = g_0$. We shall call this semiflow $\{S(t)\}_{t \geq 0}$ the \emph{coupling Hindmarsh-Rose semiflow} generated by the evolutionary equation \eqref{pb}. 

\begin{corollary} \label{Cor1}
	There exists an absorbing set for the coupling Hindmarsh-Rose semiflow $\{S(t)\}_{t \geq 0}$ in the space $H$, which is the bounded ball 
	\beq \label{abs}
	B^*_H = \{ h \in H: \| h \|^2 \leq K\}
	\eeq 
	where $K = \frac{M |\gw |}{\min \{C_1, 1\}} + 1$.
\end{corollary}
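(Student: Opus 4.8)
The plan is to read the conclusion off directly from the dissipative estimate \eqref{dse} already established inside the proof of Theorem \ref{Lm2}. That inequality asserts that every weak solution satisfies
\beq
\|g(t)\|^2 \le \frac{\max\{C_1,1\}}{\min\{C_1,1\}}\, e^{-r_1 t}\,\|g_0\|^2 + \frac{M}{\min\{C_1,1\}}\,|\gw|
\eeq
for all $t \ge 0$, with $r_1 > 0$. The second term on the right is exactly $K-1$, so the entire task reduces to showing that the transient first term can be forced to be $\le 1$ after a finite time that depends only on a bound for $\|g_0\|$ and on the fixed structural constants, not on $g_0$ itself. Note that the cruder uniform bound \eqref{gbd} is not enough here — it only shows orbits stay bounded — and the point is precisely the exponential decay factor $e^{-r_1 t}$ in \eqref{dse}.

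Carrying this out: first I would fix an arbitrary bounded set $B \subset H$ and put $\rho = \sup_{g_0 \in B}\|g_0\| < \infty$. Since $e^{-r_1 t} \to 0$ as $t \to \infty$, I can set
\beq
T_B = \max\left\{\,0,\ \frac{1}{r_1}\log\!\left(\frac{\max\{C_1,1\}}{\min\{C_1,1\}}\,\rho^2\right)\right\},
\eeq
which is finite and depends on $B$ only through $\rho$. Then for every $t \ge T_B$ and every $g_0 \in B$ one has $\tfrac{\max\{C_1,1\}}{\min\{C_1,1\}} e^{-r_1 t}\|g_0\|^2 \le 1$, so \eqref{dse} gives
\beq
\|S(t)g_0\|^2 = \|g(t, g_0)\|^2 \le 1 + \frac{M}{\min\{C_1,1\}}\,|\gw| = K ,
\eeq
i.e. $S(t)B \subset B^*_H$ for all $t \ge T_B$. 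This is exactly the defining property of an absorbing set in Definition \ref{Dabsb}, and since $B^*_H$ is visibly a bounded ball in $H$, the proof is complete; in particular the coupling Hindmarsh--Rose semiflow is dissipative.

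I do not expect a genuine obstacle in this argument — it is an immediate corollary of the global-in-time estimate \eqref{dse}, whose derivation (the choice $C_1 = \frac{1}{b}(\gb^2 + 4)$, the Young-inequality bookkeeping on the cubic and cross terms, and the Gronwall step) was already done in Theorem \ref{Lm2}. The only point requiring a little care is the degenerate case in which $\rho$ is so small that the logarithm is negative (or $B$ is empty), which is why the truncation $\max\{0,\cdot\}$ is included, guaranteeing $T_B \ge 0$ as Definition \ref{Dabsb} demands.
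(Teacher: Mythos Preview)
Your proof is correct and follows essentially the same approach as the paper's own proof: both read the absorbing property directly off the decay estimate \eqref{dse}, choosing the entry time so that the transient term $\tfrac{\max\{C_1,1\}}{\min\{C_1,1\}}e^{-r_1 t}\|g_0\|^2$ drops below $1$. The paper writes the entry time as $T_0(B) = \tfrac{1}{r_1}\log^+\!\big(R\,\tfrac{\max\{C_1,1\}}{\min\{C_1,1\}}\big)$ with $R$ a bound on $\|h\|^2$, which matches your $T_B$ (with $\rho^2 = R$ and $\log^+(\cdot) = \max\{0,\log(\cdot)\}$).
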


\begin{proof}
	From the uniform estimate \eqref{dse} in Theorem \ref{Lm2} we see that 
	\beq \label{lsp2}
	\limsup_{t \to \infty} \, \|g(t)\|^2 < K = \frac{M |\gw |}{\min \{C_1, 1\}} + 1
	\eeq
	for all weak solutions of \eqref{pb} with any initial data $g_0 \in H$. Moreover, for any given bounded set $B = \{h \in H: \|h \|^2 \leq R\}$ in $H$, there exists a finite time 
	\beq \label{T0B}
	T_0 (B) = \frac{1}{r_1} \log^+ \left(R\, \frac{\max \{C_1, 1\}}{\min \{C_1, 1\}}\right)
	\eeq
	such that $\|g(t)\|^2 < K$ for all $t > T_0 (B)$ and for all $g_0 \in B$. Thus, by Definition \ref{Dabsb}, the bounded ball $B^*_H$ shown in \eqref{abs} is an absorbing set and the coupling Hindmarsh-Rose semiflow is dissipative in the phase space $H$. 
\end{proof}

\begin{corollary} \label{Cor2}
	For any initialdata $g_0 \in H$, the weak solution $g(t, g_0)$ of the initial value problem \eqref{pb} of the coupled partly diffusive Hindmarsh-Rose equations \eqref{cHR} satisfies the estimate
	\beq \bl{L2b}
	\int_0^1 \|g(t, g_0)\|_E^2\, dt  \leq M_1 \|g_0\|^2 + M_2 |\gw|,
	\eeq
	where $M_1$ and $M_2$ are two positive constants independent of initial data.
\end{corollary}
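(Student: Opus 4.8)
The plan is to read off everything from the differential inequality \eqref{E2} already established inside the proof of Theorem \ref{Lm2}, combined with the uniform bound \eqref{gbd}. Set $Y(t) = C_1(\|u_1(t)\|^2 + \|u_2(t)\|^2) + (\|v_1(t)\|^2 + \|v_2(t)\|^2) + (\|w_1(t)\|^2 + \|w_2(t)\|^2)$. Since the term $r_1 Y(t)$ appearing in \eqref{E2} is nonnegative, discarding it leaves $\frac{d}{dt} Y(t) + C_1 d\,(\|\nb u_1(t)\|^2 + \|\nb u_2(t)\|^2) \leq \bigl(2C_2 + \tfrac{C_1^2}{16}\bigr)|\gw|$, valid on all of $[0,\infty)$ because Theorem \ref{Lm2} guarantees the maximal existence interval is $[0,\infty)$.

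First I would integrate this inequality over the interval $[0,1]$. This gives $C_1 d \int_0^1 (\|\nb u_1\|^2 + \|\nb u_2\|^2)\,dt \leq Y(0) - Y(1) + \bigl(2C_2 + \tfrac{C_1^2}{16}\bigr)|\gw| \leq \max\{C_1,1\}\,\|g_0\|^2 + \bigl(2C_2 + \tfrac{C_1^2}{16}\bigr)|\gw|$, where I used $Y(1) \geq 0$ and the elementary bound $Y(0) \leq \max\{C_1,1\}\,\|g_0\|^2$. Dividing by $C_1 d$ bounds $\int_0^1 (\|\nb u_1\|^2 + \|\nb u_2\|^2)\,dt$ by an affine function of $\|g_0\|^2$ and $|\gw|$ with coefficients depending only on the structural parameters of \eqref{cHR}.

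Next I would handle the $L^2$ part. By \eqref{gbd} one has $\|g(t)\|^2 \leq \frac{\max\{C_1,1\}}{\min\{C_1,1\}}\|g_0\|^2 + \frac{M}{\min\{C_1,1\}}|\gw|$ for every $t \geq 0$, so integrating this pointwise bound over $[0,1]$, a set of measure one, yields the same bound for $\int_0^1 \|g(t)\|^2\,dt$. Since $E = [H^1(\gw)\times L^2(\gw,\mathbb{R}^2)]^2$ and $\|u\|_{H^1(\gw)}^2 = \|u\|^2 + \|\nb u\|^2$, we have the exact decomposition $\|g(t)\|_E^2 = \|g(t)\|^2 + \|\nb u_1(t)\|^2 + \|\nb u_2(t)\|^2$. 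Adding the two estimates above then produces \eqref{L2b} with, for instance, $M_1 = \frac{\max\{C_1,1\}}{\min\{C_1,1\}} + \frac{\max\{C_1,1\}}{C_1 d}$ and $M_2 = \frac{M}{\min\{C_1,1\}} + \frac{1}{C_1 d}\bigl(2C_2 + \tfrac{C_1^2}{16}\bigr)$, both independent of $g_0$.

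There is essentially no serious obstacle here; the corollary is a bookkeeping consequence of the energy estimate. The only point that warrants a word of care is that for merely $H$-valued initial data the solution is only a weak solution, so \eqref{E2} must be understood in its integrated (energy-inequality) form obtained as the Galerkin limit — which is precisely the form used when integrating over $[0,1]$ — while the regularity $g \in L^2([0,1];E)$ from Proposition \ref{pps} ensures a priori that all the integrals appearing are finite.
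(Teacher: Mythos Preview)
Your proof is correct and follows essentially the same route as the paper's: integrate \eqref{E2} over $[0,1]$ (dropping the nonnegative $r_1$-term and the boundary term at $t=1$) to bound the gradient integral, use \eqref{gbd} for the $L^2$-part, and add. You are in fact slightly more careful than the paper, giving the constants $M_1, M_2$ explicitly and noting the integrated-form interpretation for weak solutions.
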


\begin{proof}
	Integrate the differential inequality \eqref{E2} over the time interval $[0, 1]$ to get
	$$
	C_1 d \int_0^1 (\|\nb u_1(t)\|^2 + \|\nb u_2(t)\|^2)\, dt \leq \max \{C_1, 1\} \|g_0\|^2 + \left(2C_2 + \frac{C_1^2}{16}\right) |\gw|.
	$$
	And \eqref{gbd} means that 
	$$
	\int_0^1 \|g(t, g_0)\|^2\, dt  \leq \frac{\max \{C_1, 1\}}{\min \{C_1, 1\}}\, \|g_0\|^2 +  \frac{M}{\min \{C_1, 1\}} |\gw |.
	$$
	Summing up the above two inequalities, we reach the result \eqref{L2b}.
\end{proof}

In the next result, we show that the coupling Hindmarsh-Rose semiflow $\{S(t)\}_{t \geq 0}$ has also the absorbing property in the space $E$ with the $H^1$-regularity for the $u$-components. 

\begin{theorem} \label{AbE}
	For the coupling Hindmarsh-Rose semiflow $\{S(t)\}_{t \geq 0}$, there exists an absorbing set in the space $E$, which is a bounded ball
	\beq \label{ac}
	B^*_E = \{ h \in E: \| h \|_E^2 \leq Q\}
	\eeq
	where $Q > 0$ is a constant. For any given bounded set $B \subset H$, there exists a finite time $T_B > 0$ such that for any initial state $g_0 \in B$, the weak solution $g(t) = S(t)g_0$ of the initial value problem \eqref{pb} of the coupled partly diffusive Hindmarsh-Rose equations \eqref{cHR} enters the ball $B^*_E$ permanently for $t > T_B$.
\end{theorem}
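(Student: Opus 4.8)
The plan is to bootstrap the $H$-dissipativity of Corollary \ref{Cor1} to $E$-dissipativity by establishing a uniform-in-time bound on $\|\nb u_1(t)\|^2 + \|\nb u_2(t)\|^2$ via the uniform Gronwall lemma; this is the analogue for the coupled system of the $H^1$-absorbing estimate proved for the single diffusive Hindmarsh--Rose neuron in \cite{PY, PYS}. Observe that the $E$-norm only asks for $H^1$-regularity of the $u_i$-components while keeping $v_i, w_i$ in $L^2$, and $\|v_i\|, \|w_i\|$ are already controlled by Corollary \ref{Cor1}; so the single new ingredient required is the gradient bound for $u_1, u_2$. To make the computation rigorous, first note that by Corollary \ref{Cor2} the weak solution satisfies $g(t_0) \in E$ for a.e.\ $t_0 \in (0,1)$, so by Proposition \ref{pps} it is a strong solution on $[t_0, \infty)$ lying in $L^2_{loc}([t_0,\infty); D(A))$; hence $u_i(t) \in H^2(\gw)$ for a.e.\ $t$ and the manipulations below are legitimate for a.e.\ $t \geq t_0$.

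Next, take the $L^2$ inner product of the $u_i$-equation in \eqref{cHR} with $-\gd u_i$ for $i = 1, 2$ and integrate by parts using the Neumann condition \eqref{nbc}. The diffusion term contributes $d\|\gd u_i\|^2$; the cubic term $-bu_i^3$ produces the \emph{good} dissipative term $-3b\int_\gw u_i^2 |\nb u_i|^2\, dx$; the quadratic term $au_i^2$ produces the indefinite term $2a\int_\gw u_i |\nb u_i|^2\, dx$; the remaining terms $v_i - w_i + J$ and the coupling $p(u_j - u_i)$, $j \neq i$, give inner products against $-\gd u_i$. The crux of the argument — and the step I expect to be the main obstacle — is the absorption of the indefinite quadratic interaction by the good cubic term: from the pointwise Young inequality $2a|u_i| \leq 3b u_i^2 + \frac{a^2}{3b}$ one gets
\[
2a\int_\gw u_i |\nb u_i|^2\, dx \;\leq\; 3b\int_\gw u_i^2 |\nb u_i|^2\, dx + \frac{a^2}{3b}\,\|\nb u_i\|^2,
\]
so after cancellation only the harmless term $\frac{a^2}{3b}\|\nb u_i\|^2$ survives. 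The terms with $v_i - w_i + J$ and with $p(u_j - u_i)$ are each split by Young's inequality against $\tfrac{d}{4}\|\gd u_i\|^2$, leaving $\tfrac{d}{2}\|\gd u_i\|^2 \geq 0$ on the left. Summing over $i = 1,2$ yields a differential inequality of the form
\[
\frac{d}{dt}\big(\|\nb u_1\|^2 + \|\nb u_2\|^2\big) \;\leq\; \frac{2a^2}{3b}\big(\|\nb u_1\|^2 + \|\nb u_2\|^2\big) + h(t),
\]
where $h(t) \leq C\big(\|v_1\|^2 + \|v_2\|^2 + \|w_1\|^2 + \|w_2\|^2 + \|u_1 - u_2\|^2 + |\gw|\big)$ for a structural constant $C$ depending only on $d, J, p$.

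Now fix a bounded set $B \subset H$ and let $T_0(B)$ be the entrance time from Corollary \ref{Cor1}, so $\|g(t)\|^2 \leq K$ for $t > T_0(B)$; then $h(t) \leq C_3$ for $t > T_0(B)$, with $C_3$ depending only on $K$ and the fixed parameters of \eqref{cHR}. Integrating the differential inequality \eqref{E2} over $[t, t+1]$ and using $\|g(t)\|^2 \leq K$ also gives $\int_t^{t+1}\big(\|\nb u_1\|^2 + \|\nb u_2\|^2\big)\, ds \leq C_4$ for $t > T_0(B)$. Applying the uniform Gronwall lemma to $y(t) := \|\nb u_1(t)\|^2 + \|\nb u_2(t)\|^2$, which obeys $y' \leq \tfrac{2a^2}{3b} y + C_3$ together with $\int_t^{t+1} y\, ds \leq C_4$, yields $y(t) \leq (C_4 + C_3)\, e^{2a^2/(3b)} =: Q_0$ for all $t > T_0(B) + 1$. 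Since $\|g(t)\|_E^2 = \|g(t)\|^2 + y(t) \leq K + Q_0$ for $t > T_0(B) + 1$, the theorem follows with $Q := K + Q_0 + 1$, $T_B := T_0(B) + 1$, and $B^*_E$ the ball in \eqref{ac}; the constants $C_3, C_4, Q_0, Q$ depend only on $K$ and the parameters of \eqref{cHR}, not on $g_0$, so the bound is uniform over $g_0 \in B$, and $T_B > 1 > t_0$ keeps us in the strong-solution regime where the estimates were derived.
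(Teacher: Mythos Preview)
Your proof is correct and follows the same energy-estimate skeleton as the paper --- test the $u_i$-equations against $-\Delta u_i$, absorb the quadratic nonlinearity into the cubic one via $2a|u_i| \leq 3bu_i^2 + a^2/(3b)$, and control the lower-order forcing by the $H$-absorbing set --- but the closing step differs in a meaningful way. The paper does \emph{not} discard the residual $d\|\Delta u_i\|^2$ term; instead it invokes the Lions interpolation lemma (using the compact embedding $H^2(\Omega)\hookrightarrow H^1(\Omega)\hookrightarrow L^2(\Omega)$) to write $(C_3+1)(\|\nabla u_1\|^2+\|\nabla u_2\|^2) \leq d(\|\Delta u_1\|^2+\|\Delta u_2\|^2) + C_4(\|u_1\|^2+\|u_2\|^2)$, which flips the sign of the gradient term and yields a genuine dissipative inequality $\frac{d}{dt}y + y \leq \text{const}$; a single application of standard Gronwall, seeded by a finite $\|g(\tau)\|_E$ extracted from Corollary~\ref{Cor2} via the mean value theorem, then finishes. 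Your route drops $\|\Delta u_i\|^2$ entirely and instead feeds the raw inequality $y'\leq \frac{2a^2}{3b}y + h$ into the \emph{uniform} Gronwall lemma, using the time-integrated gradient bound obtained from \eqref{E2}. Both are standard and valid; your argument is somewhat more elementary in that it avoids the interpolation lemma, while the paper's version yields an explicit exponential-decay form and a slightly cleaner constant. A minor further difference: you estimate the coupling term $p(u_j-u_i)(-\Delta u_i)$ by Young's inequality, whereas the paper integrates it by parts and sums over $i$ to obtain the sign-definite contribution $-p\|\nabla(u_1-u_2)\|^2\leq 0$, which is dropped; either treatment is fine here.
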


\begin{proof}
	We make estimates by taking the $L^2$ inner-products of the $u_i$-equation with $- \gd u_i, i = 1, 2$, and then summing up the inequalities to obtain 
	\beq \bl{Hu}
	\begin{split}
		&\frac{1}{2} \frac{d}{dt} (\|\nb u_1\|^2 + \|\nb u_2\|^2) + d (\|\gd u_1\|^2 + \|\gd u_2\|^2) \\
		= &\,  \int_\gw [(au_1^2 -bu_1^3  + v_1 - w_1 + J)(-\gd u_1 )- p(u_2 - u_1)(\gd u_1)] \, dx \\
		 &+\, \int_\gw [(au_2^2 -bu_2^3  + v_2 - w_2 + J)(-\gd u_2) - p(u_1 - u_2)(\gd u_2)] \, dx \\
		\leq &\, \int_\gw (- au_1^2\gd u_1 - 3bu_1^2 |\nb u_1|^2 - v_1 \gd u_1 + w_1 \gd u_1 - J\gd u_1)\, dx \\
		 &+\, \int_\gw (- au_2^2\gd u_2 - 3bu_2^2 |\nb u_2|^2 - v_2 \gd u_2 + w_2 \gd u_2 - J\gd u_2)\, dx - p\|\nb (u_1 - u_2)\|^2 \\
		\leq &\, \int_\gw \left(2au_1 |\nb u_1|^2 - 3bu_1^2 |\nb u_1|^2 + \frac{2v_1^2}{d} + \frac{2w_1^2}{d} + \frac{d}{4} |\gd u_1|^2\right) dx \\
		 &+\, \int_\gw \left((2au_2 |\nb u_2|^2 - 3bu_2^2 |\nb u_2|^2 + \frac{2v_2^2}{d} + \frac{2w_2^2}{d} + \frac{d}{4} |\gd u_2|^2\right) dx - p\|\nb (u_1 - u_2)\|^2 \\
		= &\, \int_\gw \left((2au_1 - 3bu_1^2) |\nb u_1|^2 + \frac{2}{d}(v_1^2 + w_1^2) + \frac{d}{4} |\gd u_1|^2\right) dx \\
		 &+\, \int_\gw \left((2au_2 - 3bu_2^2) |\nb u_2|^2 + \frac{2}{d}(v_2^2 + w_2^2) + \frac{d}{4} |\gd u_2|^2\right) dx - p\|\nb (u_1 - u_2)\|^2 \\
		\leq &\, \int_\gw \frac{2}{d} \left(v_1^2 + v_2^2 + w_1^2 + w_2^2\right) dx + \frac{d}{2} (\|\gd u_1\|^2 + \|\gd u_2\|^2) - p\|\nb (u_1 - u_2)\|^2 \\[4pt]
		&+\, C_3 (\|\nb u_1\|^2 + \|\nb u_2\|^2),
	\end{split}
	\eeq
	where $C_3 = a^2/(3b)$ is a constant, because $$2au_i - 3bu_i^2 = C_3 - (\sqrt{3b} u_i - \sqrt{C_3})^2 \leq C_3$$ for $i = 1, 2$. Then from \eqref{Hu} it follows that
	\beq \bl{Heq}
	\begin{split}
		&\frac{d}{dt} (\|\nb u_1\|^2 + \|\nb u_2\|^2) + d (\|\gd u_1\|^2 + \|\gd u_2\|^2) \\
		\leq &\, C_3 (\|\nb u_1\|^2 + \|\nb u_2\|^2) + \int_\gw \frac{2}{d} \left(v_1^2 + v_2^2 + w_1^2 + w_2^2 \right) dx, \;\;  t > 0.
	\end{split}
	\eeq
	By Corollary \ref{Cor1}, for any given bounded set $B = \{h \in H: \|h\|^2 \leq R\} \subset H$, there is a finite time $T_0 (B) > 0$ such that for all $t > T_0 (B)$ and any initial state $g_0 \in B$,
	\beq \bl{gK}
	\int_\gw \frac{2}{d} \left(v_1^2(t, x) + v_2^2(t, x) + w_1^2(t, x) + w_2^2(t, x) \right) dx \leq \frac{2}{d}\, \|g(t)\|^2 \leq \frac{2K}{d}.
	\eeq	
	On the other hand, for a bounded domain $\gw$ in $\mR^3$ combined with the homogeneous Neumann boundary condition, the Sobolev imbedding $H^2 (\gw) \hookrightarrow H^1 (\gw) \hookrightarrow L^2 (\gw)$ is compact.  We can use the Lions Lemma on interpolation of Sobolev spaces: for any given $\ve > 0$, there is a constant $C_\ve > 0$ such that 
	$$
	\|\nb u_i (t)\|^2 \leq \ve \| \gd u_i \|^2 + C_\ve \|u_i \|^2, \quad \text{for} \;\, i = 1, 2.
	$$
	Therefore, there exists a constant $C_4 > 0$ only depending on the parameters $a, b$ and $d$ such that (with the above $\ve = d$)
	\beq \bl{LL}
	(C_3 + 1)  (\|\nb u_1\|^2 + \|\nb u_2\|^2) \leq d (\|\gd u_1\|^2 + \|\gd u_2\|^2) + C_4  (\|u_1\|^2 + \|u_2\|^2) 
	\eeq
	for all $t > \tau > 0$. 
	
	Substitute \eqref{gK} and \eqref{LL} into \eqref{Heq}. Then we obtain the inequality
	\beq \bl{ues}
	\begin{split}
		&\frac{d}{dt} (\|\nb u_1\|^2 + \|\nb u_2\|^2) + (\|\nb u_1\|^2 + \|\nb u_2\|^2) \\
		\leq &\, C_4  (\|u_1\|^2 + \|u_2\|^2) + \frac{2K}{d} \leq C_4 \|g(t)\|^2 + \frac{2K}{d} \leq C_4 K + \frac{2K}{d}
	\end{split}
	\eeq
	for all $t > \max \, \{1, T_0 (B)\}$. 
	
	By Corollary \ref{Cor2} and \eqref{L2b}, for any given bounded ball $B = \{h \in H: \|h\|^2 \leq R\}$ aforementioned and $g_0 \in B$, the mean value theorem shows that the weak solution $g(t, g_0) \in L^2 ([0, 1], E)$ and there exists a time $0 < \tau \leq 1$, such that 
	\beq \bl{mv}
	\|g(\tau, g_0)\|_E^2 = \int_0^1 \|g(t, g_0)\|_E^2\, dt  \leq M_1 \|g_0\|^2 + M_2 |\gw| \leq M_1R + M_2 |\gw|.
	\eeq
	Now we can use the Gronwall inequality to \eqref{ues}, namely,
	$$
	\frac{d}{dt} (\|\nb u_1\|^2 + \|\nb u_2\|^2) + (\|\nb u_1\|^2 + \|\nb u_2\|^2)  \leq C_4 K + \frac{2K}{d},
	$$
	to reach the uniform estimate 
	\beq \bl{ub}
	\begin{split}
		&\|\nb u_1(t)\|^2 +  \|\nb u_2 (t)\|^2 \leq e^{-(t - \tau)} (\|\nb u_1(\tau)\|^2 + \|\nb u_2 (\tau)\|^2) + C_4 K + \frac{2K}{d} \\
		\leq &\, e^{-(t-1)} \|g(\tau, g_0)\|_E^2 +  C_4 K + \frac{2K}{d} \leq e^{-(t-1)} (M_1 R + M_2 |\gw|) + C_4 K + \frac{2K}{d} \\
		\leq &\, e^{-(t-1)} M_1 R + M_2 |\gw| + C_4 K + \frac{2K}{d}, \quad \text{for} \;\, t > \max \{1, T_0 (B)\},
	\end{split}
	\eeq
	where $T_0 (B)$ is given in \eqref{T0B}. 
	
	Finally, it follows that for any $g_0 \in B$, there exists a finite time 
	$$
	T_B =  \max \{T_0 (B), T_1 (B)\},
	$$
	where $T_1 (B) = 1 + \log^+ (R)$, such that $e^{-(t-1)}R < 1$. Hence,
	\beq \bl{EQ}
	\|g(t, g_0)\|_E^2  = \|\nb u_1(t)\|^2 + \|\nb u_2(t)\|^2 + \| g(t, g_0)\|_H^2 \leq Q, \;\; \text{for} \; t > T_B,
	\eeq
	where 
	\beq \bl{cQ}
	Q = M_1 + M_2 |\gw| + K(1 + C_4 + 2/d). 
	\eeq
	Then the bounded ball $B^*_E$ in \eqref{ac} with $Q$ given in \eqref{cQ} is an absorbing set for the coupling Hindmarsh-Rose semiflow $\{S(t)\}_{t \geq 0}$ in the space $E$. 
\end{proof} 

\section{\textbf{Synchronization of Neurons}} 

Synchronization of neurons is one of the central topics in neuroscience. Here we shall prove that the new model of the coupled Hindmarsh-Rose neurons proposed in this paper will yield the asymptotic synchronization of two coupled neurons, which can be extended to synchronization study for complex neuronal networks.

\begin{definition}
	For the model equations \eqref{cHR} of two coupled neurons, we define the \emph{asynchronous degree} of the coupled Hindmarsh-Rose semiflow to be
	$$
	deg_s (\text{HR})= \sup_{g_1^0, g_2^0 \in H} \, \left\{\limsup_{t \to \infty} \, \|g_1 (t) -g_2(t)\|_H\right\}
	$$ 
	where $g_1(t)$ and $g_2(t)$ are any two solutions of \eqref{cHR} with the initial states $g_1^0$ and $g_2^0$, respectively. The semiflow is said to be asymptotically synchronized if $deg_s (\text{HR}) = 0$.
\end{definition}

The following synchronization theorem is the main result of this work.

\begin{theorem} \bl{Syn}
	For the coupled Hindmarsh-Rose semiflow generated by the weak solutions of the initial value problem \eqref{pb} of the coupled partly diffusive Hindmarsh-Rose equations \eqref{cHR}, 
	\beq \bl{dg0}
	deg_s (\textup{HR}) = 0
	\eeq
	provided that the coefficient of coupling strength $p > 0$ is sufficiently large, 
	\beq \bl{pcon}
	p > \frac{\gl}{2} + \frac{a^2}{b} + \frac{1}{4\gl \,r} (q - \gl)^2 = \frac{4\beta^2}{b} + \frac{a^2}{b} + \frac{b}{32\beta^2 r} \left(q - \frac{8\beta^2}{b}\right)^2,
	\eeq
	where $\gl = \frac{8\beta^2}{b}$. Under the condition \eqref{pcon}, the coupled Hindmarsh-Rose neurons are asymptotically synchronized in the space $H$ at a uniform rate independent of any initial states.
\end{theorem}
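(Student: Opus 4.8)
The plan is to obtain a single dissipative inequality for the difference of the two neuron components of a solution. Put $U = u_1 - u_2$, $V = v_1 - v_2$, $W = w_1 - w_2$; then $deg_s(\textup{HR})$ is governed by $\limsup_{t\to\infty}\|(U,V,W)(t)\|$. Subtracting the $u_2,v_2,w_2$ equations in \eqref{cHR} from the $u_1,v_1,w_1$ equations and using $u_1^2-u_2^2 = (u_1+u_2)U$ and $u_1^3-u_2^3 = (u_1^2+u_1u_2+u_2^2)U$ gives
\bss
\pdr_t U &= d\gd U + [a(u_1+u_2) - b(u_1^2+u_1u_2+u_2^2)]U + V - W - 2pU, \\
\pdr_t V &= -V - \gb(u_1+u_2)U, \\
\pdr_t W &= qU - rW,
\ess
with a homogeneous Neumann condition for $U$. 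The decisive structural fact is that the two coupling terms combine into the strongly damping term $-2pU$, which is precisely where the threshold on $p$ enters.

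I would then introduce the weighted energy functional $\Phi(t) = \frac12\bigl(\|U\|^2 + \frac1\gl\|V\|^2 + \frac1\gl\|W\|^2\bigr)$ with $\gl = 8\gb^2/b$, and differentiate it by pairing the three equations above in $L^2(\gw)$ with $U$, $\frac1\gl V$ and $\frac1\gl W$ respectively, summing, and discarding the nonpositive term $-d\|\nb U\|^2$ (the regularity in Proposition \ref{pps}, or a Galerkin scheme, makes this legitimate). The only nonlinear terms are $\int_\gw[a(u_1+u_2)-b(u_1^2+u_1u_2+u_2^2)]U^2\,dx$ and $-\frac\gb\gl\int_\gw(u_1+u_2)UV\,dx$. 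From $u_1^2+u_1u_2+u_2^2\geq\frac34(u_1+u_2)^2$ and completion of the square one has $a(u_1+u_2)-b(u_1^2+u_1u_2+u_2^2)\leq \frac{a^2}{b}-\frac b2(u_1+u_2)^2$, so the cubic term contributes $\frac{a^2}{b}\|U\|^2$ together with a reserve $-\frac b2\int_\gw(u_1+u_2)^2U^2\,dx$; Young's inequality bounds the $V$-cross term by $\frac b2\int_\gw(u_1+u_2)^2U^2\,dx + \frac{\gb^2}{2\gl^2 b}\|V\|^2$, whose first summand is annihilated by that reserve. This cancellation is what makes the argument go through without any $L^\infty$ bound on $u_1,u_2$.

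It remains to absorb the linear cross terms $\int_\gw VU\,dx$ and $(\frac q\gl-1)\int_\gw UW\,dx$ (the latter combining $-W$ in the $U$-equation with $qU$ in the weighted $W$-equation) by Young's inequality tuned to $\gl$ and to $\frac r\gl\|W\|^2$: they produce $\frac\gl2\|U\|^2+\frac1{2\gl}\|V\|^2$ and $\frac1{4\gl r}(q-\gl)^2\|U\|^2$ plus at most a small fraction of $\|W\|^2$. Collecting everything, the value $\gl = 8\gb^2/b$ makes the coefficients of $\|V\|^2$ and $\|W\|^2$ strictly negative, while the coefficient of $\|U\|^2$ is at most $\frac{a^2}{b}+\frac\gl2+\frac1{4\gl r}(q-\gl)^2 - 2p$ up to an arbitrarily small slack, which is negative under \eqref{pcon} (with room, since \eqref{pcon} asks $p$, not merely $2p$, to exceed that sum). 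Thus $\frac{d}{dt}\Phi \leq -\sigma\Phi$ for a constant $\sigma>0$ depending only on the parameters, and Gronwall's inequality yields $\Phi(t)\leq \Phi(0)e^{-\sigma t}$. Since $\Phi$ is equivalent to $\|(U,V,W)\|^2$, this gives $\|g_1(t)-g_2(t)\|_H \leq C\,\|g_1^0-g_2^0\|_H\,e^{-\sigma t/2}$ with $C,\sigma$ independent of the data, hence $deg_s(\textup{HR})=0$ at the uniform exponential rate $\sigma/2$.

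The one genuinely delicate step is the cross term $-\gb(u_1+u_2)UV$: since $u_1,u_2$ are controlled a priori only in $H^1(\gw)$ (via the absorbing set of Theorem \ref{AbE}), estimating $\int_\gw(u_1+u_2)^2U^2\,dx$ through a Sobolev embedding would import embedding constants and ruin the clean form of \eqref{pcon}. Balancing it instead against the dissipation reserve peeled from the cubic nonlinearity is the device that keeps the threshold explicit; arranging the weights $1/\gl$ on $\|V\|^2$ and $\|W\|^2$ together with the single parameter $\gl$ so that all three squared norms acquire negative coefficients under one inequality on $p$ is the remaining bookkeeping.
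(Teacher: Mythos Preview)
Your argument is correct and follows essentially the same route as the paper: derive the difference system for $(U,V,W)$, test against a weighted $L^2$ energy with weight parameter $\lambda=8\beta^2/b$, absorb the linear cross terms $UV$ and $(q-\lambda)UW$ by Young's inequality, and use the negative reserve from the cubic $-b(u_1^2+u_1u_2+u_2^2)U^2$ to swallow the dangerous $\beta(u_1+u_2)UV$ term without needing any $L^\infty$ control on $u_1,u_2$. The only cosmetic differences are that the paper puts the weight $\lambda$ on $\|U\|^2$ (instead of $1/\lambda$ on $\|V\|^2,\|W\|^2$) and uses the lower bound $u_1^2+u_1u_2+u_2^2\geq\tfrac12(u_1^2+u_2^2)$ with two separate completions of the square, yielding $\tfrac{2a^2}{b}$ where your single completion via $u_1^2+u_1u_2+u_2^2\geq\tfrac34(u_1+u_2)^2$ gives $\tfrac{a^2}{b}$; this is exactly the ``room'' you noticed relative to \eqref{pcon}.
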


\begin{proof}
	Let $g_1(t) = \text{col}\, (u_1(t), v_1(t), w_1(t))$ and $g_2(t) = \text{col}\, (u_2(t), v_2(t), w_2(t))$ be the first three components and the last three components of any solution of \eqref{cHR} in $H$ with the initial states $g_1^0 = (u_1^0, v_1^0, w_1^0)$ and $g_2^0 = (u_2^0, v_2^0, w_2^0)$, respectively. Denote by $U(t) = u_1(t) - u_2(t), V(t) = v_1(t) - v_2(t), W(t) = w_1(t) - w_2(t)$. Then
	$$
	g_1 (t) - g_2 (t) = \text{col}\, (U(t), V(u), W(t)), \quad t \geq 0.
	$$
	By subtraction of the last three equations from the first three equations in \eqref{cHR}, we obtain the difference Hindmarsh-Rose equations:
	\beq \bl{dHR}
	\begin{split}
		\frac{\pdr U}{\pdr t} & = d \gd U +  a(u_1 + u_2)U- b(u_1^2 + u_1 u_2 + u_2^2)U + V - W - 2pU,  \\
		\frac{\pdr V}{\pdr t} & =  - V - \beta (u_1 +  u_2)U,    \\
		\frac{\pdr W}{\pdr t} & = q U - r W, 
	\end{split}
	\eeq
	
	Conduct estimates by taking the $L^2$ inner-products of the first equation with $\gl U(t)$ (the constant $\gl > 0$ is to be chosen later), the second equation with $V(t)$, and the third equation with $W(t)$ respectively and then sum them up to get
	\beq \bl{eG}
	\begin{split}
		&\frac{1}{2} \frac{d}{dt} (\gl \|U(t)\|^2 + \|V(t)\|^2 + \|W(t)\|^2) \\[11pt]
		 &\,+ d \gl \, \|\nb U(t)\|^2 + 2p\gl \, \|U(t)\|^2 + \|V(t)\|^2 + r\, \|W(t)\|^2 \\[7pt]
		= &\, \int_\gw \gl \left(a(u_1 + u_2)U^2 - b(u_1^2 + u_1 u_2 + u_2^2) U^2 \right) dx \\[4pt]
		&\, + \int_\gw \left(\gl UV -\beta (u_1 +  u_2)UV + (q -\gl)UW \right) dx \\[4pt]
		\leq &\, \int_\gw \left(\gl a\,(u_1 + u_2)U^2  -\beta (u_1 +  u_2)UV - \gl b\,(u_1^2 + u_1 u_2 + u_2^2) U^2 \right) dx \\[4pt]
		 &\,+ \left(\gl^2 + \frac{1}{2r} (q - \gl)^2\right) \|U(t)\|^2 + \frac{1}{4}\|V(t)\|^2 + \frac{r}{2}\|W(t)\|^2, \quad t > 0.
	\end{split}
	\eeq
	In the last step of \eqref{eG}, we used the following Young's inequalities:
	\begin{gather*}
	\gl U(t) V(t) \leq \gl^2 U^2(t) + \frac{1}{4} V^2(t), \\[6pt]
	(q - \gl)U(t) W(t) \leq \frac{1}{2r}(q - \gl)^2 U^2(t) + \frac{r}{2} W^2(t).
	\end{gather*}
	The integral terms in the last inequality of \eqref{eG} are treated as follows:
	\beq \bl{nlt}
	\begin{split}
		&\int_\gw \left(\gl a\,(u_1 + u_2)U^2  -\beta (u_1 +  u_2)UV - \gl b\,(u_1^2 + u_1 u_2 + u_2^2) U^2 \right) dx \\[6pt]
		\leq &\, \int_\gw \left(\gl a\,(u_1 + u_2)U^2  - \beta (u_1 +  u_2)UV - \frac{\gl b}{2}(u_1^2 + u_2^2) U^2 \right) dx \\[6pt]
		\leq &\, \int_\gw \left(\gl a\,(u_1 + u_2)U^2  + 2 \beta^2 (u_1^2 +  u_2^2)U^2 + \frac{1}{4} V^2 - \frac{\gl b}{2}(u_1^2 + u_2^2) U^2 \right) dx.
	\end{split}
	\eeq
	Now we choose the constant multiplier to be
	\beq \bl{lbd}
	\gl = \frac{8 \beta^2}{b} > 0,
	\eeq
	so that \eqref{nlt} is reduced to
	\beq \bl{me}
	\begin{split}
		&\int_\gw \left(\gl a\,(u_1 + u_2)U^2  -\beta (u_1 +  u_2)UV - \gl b\,(u_1^2 + u_1 u_2 + u_2^2) U^2 \right) dx \\
		\leq &\, \int_\gw \left(\gl a\,(|u_1| + |u_2|)U^2 + \frac{1}{4} V^2 - \frac{\gl b}{4}(u_1^2 + u_2^2) U^2 \right) dx \\
		= &\, \frac{1}{4} \|V(t)\|^2 +  \int_\gw \left(\gl a\,(|u_1| + |u_2|)U^2 - \frac{\gl b}{4}(u_1^2 + u_2^2) U^2 \right) dx \\
		= &\, \frac{1}{4} \|V(t)\|^2 +  \int_\gw \left( a(u_1 + u_2) - \frac{b}{4}(u_1^2 + u_2^2) \right) \gl \, U^2\, dx \\
		= &\, \frac{1}{4} \|V(t)\|^2 +  \int_\gw \left[\frac{2a^2}{b} - \left(\frac{a}{b^{1/2}} - \frac{b^{1/2}}{2}\, u_1\right)^2 - \left(\frac{a}{b^{1/2}} - \frac{b^{1/2}}{2}\, u_2\right)^2 \right] \gl \,U^2\, dx \\
		\leq &\, \frac{1}{4} \|V(t)\|^2 + \frac{2\gl a^2}{b} \|U(t)\|^2.
	\end{split}
	\eeq
	Substitute \eqref{me} into \eqref{eG}. Then we obtain
	\begin{align*}
	&\frac{1}{2} \frac{d}{dt} (\gl \|U(t)\|^2 + \|V(t)\|^2 + \|W(t)\|^2) \\[8pt]
	&\;+ d \gl \, \|\nb U(t)\|^2 + 2p\gl \, \|U(t)\|^2 + \|V(t)\|^2 + r\, \|W(t)\|^2 \\[2pt]
	\leq &\, \left(\gl^2 + \frac{2\gl a^2}{b} + \frac{1}{2r} (q - \gl)^2\right) \|U(t)\|^2 + \frac{1}{2}\|V(t)\|^2 + \frac{r}{2}\|W(t)\|^2, \quad t > 0.
	\end{align*}
	From the above inequality we get
	\beq \bl{syc}
	\begin{split}
		\frac{d}{dt} (\gl \|U(t)\|^2 + &\,\|V(t)\|^2 + \|W(t)\|^2 ) + 4p\gl \, \|U(t)\|^2 + \|V(t)\|^2 + r \|W(t)\|^2 \\[3pt]
		&\leq \left(2\gl^2 + \frac{4\gl a^2}{b} + \frac{1}{r} (q - \gl)^2\right) \|U(t)\|^2, \quad t > 0.
	\end{split}
	\eeq
	
	Under the condition \eqref{pcon} that the coupling coefficient $p > 0$ is sufficiently large: 
	\beq \bl{pl}
	4p\gl - \left(2\gl^2 + \frac{4\gl a^2}{b} + \frac{1}{r} (q - \gl)^2\right) = \delta  > 0,
	\eeq
	we end up with the differential inequality
	\begin{align*}
	&\frac{d}{dt} (\gl \|U(t)\|^2 + \|V(t)\|^2 + \|W(t)\|^2 ) + \min \left\{\frac{\delta}{\gl}, r \right\} (\gl \|U(t)\|^2 + \|V(t)\|^2 + \|W(t)\|^2) \\
	&\leq \frac{d}{dt} (\gl \|U(t)\|^2 + \|V(t)\|^2 + \|W(t)\|^2 ) + \delta \|U(t)\|^2 + \|V(t)\|^2 + r \|W(t)\|^2 \leq 0
	\end{align*}
	for $t > 0$. This inequality is written as
	\beq \bl{fnl}
	\frac{d}{dt} (\gl \|U(t)\|^2 + \|V(t)\|^2 + \|W(t)\|^2 ) + \mu (\gl \|U(t)\|^2 + \|V(t)\|^2 + \|W(t)\|^2) \leq 0, \;\, t > 0 ,
	\eeq
	where $\mu = \min \{\delta/\gl, r\}$, for any two initial state $g_1^0, g_2^0 \in H$. We can solve \eqref{fnl} by Gronwall inequality to reach the conclusion that for any two initial states $g_1^0, g_2^0 \in H$, 
	\beq \bl{dsyn}
	\begin{split}
		\min &\,\{1, \gl \} \|g_1(t) - g_2 (t)\|^2 \leq \gl \|U(t)\|^2 + \|V(t)\|^2 + \|W(t)\|^2  \\[5pt]
		& \leq  e^{- \mu t} \max \{1, \gl \} \|g_1^0 - g_2^0 \|^2 \to 0, \;\; \text{as} \;\, t \to \infty. 
	\end{split}
	\eeq
	Hence it holds that
	$$
	deg_s (\text{HR})= \sup_{g_1^0, g_2^0 \in H} \, \left\{\limsup_{t \to \infty} \|g_1 (t) -g_2(t)\|_H\right\} = 0.
	$$
	It shows that the coupled Hindmarsh-Rose neurons are asymptotically synchronized at a uniform rate. The proof is completed.
\end{proof}

As a remark, one can further study the synchronization problem of the coupled neurons in the space $E$. Another interesting question is to find the lower bound of threshold of the coupling strength $p > 0$ for the self-synchronization in this model.

\vspace{10pt}
\bibliographystyle{amsplain}

\begin{thebibliography}{99}

\bibitem{BRS}
R.J. Buters, J. Rinzel and J.C. Smith, {\em Models respiratory rhythm generation in the pre-B\"{o}tzinger complex, I. Bursting pacemaker neurons}, J. Neurophysiology, \textbf{81} (1999), 382--397.

\bibitem{CK}
T.R. Chay and J. Keizer,  {\em Minimal model for membrane oscillations in the pancreatic beta-cell}, Biophysiology Journal, \textbf{42} (1983), 181--189.

\bibitem{CV}
V. V. Chepyzhov and M. I. Vishik, {\em Attractors for Equations of Mathematical Physics},  AMS Colloquium Publications, Vol. \textbf{49}, AMS, Providence, RI, 2002.

\bibitem{CS}
L.N. Cornelisse, W.J. Scheenen, W.J. Koopman, E.W. Roubos and S.C. Gielen, {\em Minimal model for intracellular calcium oscillations and electrical bursting in melanotrope cells of Xenopus Laevis}, Neural Computations, \textbf{13} (2000), 113--137.

\bibitem{DJ}
M. Dhamala, V.K. Jirsa and M. Ding, {\em Transitions to synchrony in coupled bursting neurons}, Physical Review Letters, \textbf{92} (2004), 028101.

\bibitem{ET}
G.B. Ementrout and D.H. Terman, {\em Mathematical Foundations of Neurosciences}, Springer, 2010. 

\bibitem{FH}
R. FitzHugh, {\em Impulses and physiological states in theoretical models of nerve membrane}, Biophysical Journal, \textbf{1} (1961), 445--466.

\bibitem{HR}
J.L. Hindmarsh and R.M. Rose, {\em A model of neuronal bursting using three coupled first-order differential equations}, Proceedings of the Royal Society London, Ser. B: Biological Sciences,  \textbf{221} (1984), 87--102.

\bibitem{HH}
A. Hodgkin and A. Huxley, {\em A quantitative description of membrane current and its application to conduction and excitation in nerve}, J. Physiology, Ser. B,  \textbf{117} (1952), 500--544.

\bibitem{IG}
G. Innocenti and R. Genesio, {\em On the dynamics of chaotic spiking-bursting transition in the Hindmarsh-Rose neuron}, Chaos, \textbf{19} (2009), 023124.

\bibitem{EI}
E.M. Izhikecich, {\em Dynamical Systems in Neuroscience: The Geometry of Excitability and Bursting}, MIT Press, Cambridge, Massachusetts, 2007.

\bibitem{MFL}
S.Q. Ma, Z. Feng and Q. Lu, {\em Dynamics and double Hopf bifurcations of the Rose-Hindmarsh model with time delay}, International Journal of Bifurcation and Chaos, \textbf{19} (2009), 3733--3751.

\bibitem{PY}
C. Phan and Y. You, {\em Exponentiall attractors for Hindmarsh-Rose equations in neurodynamics}, arXiv: 1908.05661, submitted for journal publication, 2019.

\bibitem{PYS}
C. Phan, Y. You and J. Su, {\em Global attractors for Hindmarsh-Rose equations in neurodynamics}, arXiv: 1907.13225, submitted for journal publication, 2019.

\bibitem{Ru}
J. Rubin, {\em Bursting induced by excitatory synaptic coupling in nonidentical conditional relaxation oscillators or square-wave bursters}, Physics Review E, \textbf{74} (2006), 021917. 

\bibitem{SY} 
G. R. Sell and Y. You, {\em Dynamics of Evolutionary Equations}, Applied Mathematical Sciences, Volume \textbf{143}, Springer, New York, 2002.

\bibitem{SC}
A. Shapiro, R. Curtu, J. Rinzel and N. Rubin, {\em Dynamical characteristics common to neuronal competition models}, J. Neurophysiology, \textbf{97} (2007), 462--473.

\bibitem{SPH}
J. Su, H. Perez-Gonzalez and M. He, {\em Regular bursting emerging from coupled chaotic neurons}, Discrete and Continuous Dynamical Systems, Supplement 2007, 946--955.

\bibitem{Tm}
R. Temam,{\em  "Infinite Dimensional Dynamical Systems in Mechanics anf Physics"}, 2nd edition, Springer, New York, (2013).

\bibitem{Tr}
D. Terman, {\em Chaotic spikes arising from a model of bursting in excitable membrane}, J. Appl. Math., \textbf{51} (1991), 1418--1450.

\bibitem{WS}
Z.L. Wang and X.R. Shi, {\em Chaotic bursting lag synchronization of Hindmarsh-Rose system via a single controller}, Applied Mathematics and Computation, \textbf{215} (2009), 1091--1097.

\bibitem{Su}
F. Zhang, A. Lubbe, Q. Lu and J. Su, {\em On bursting solutions near chaotic regimes in a neuron model}, Discrete and Continuous Dynamical Systems, Ser. S,  \textbf{7} (2014), 1363--1383.
\end{thebibliography}

\end{document}